\newtheorem{thm}{Theorem}[section]
\newtheorem{cor}[thm]{Corollary}
\newtheorem{lem}[thm]{Lemma}
\newtheorem{prop}[thm]{Proposition}
\theoremstyle{definition}
\newtheorem{defn}[thm]{Definition}
\theoremstyle{remark}
\newtheorem{rem}[thm]{Remark}
\numberwithin{equation}{section}
\newcommand{\Ind}{\text{Ind}}
\newcommand\restr[2]{{% we make the whole thing an ordinary symbol
  \left.\kern-\nulldelimiterspace % automatically resize the bar with \right
  #1 % the function
  \littletaller % pretend it's a little taller at normal size
  \right|_{#2} % this is the delimiter
  }}
\newcommand{\littletaller}{\mathchoice{\vphantom{\big|}}{}{}{}}
\begin{document}

\title[First $l^p$-cohomology and $L^q$-Measure Equivalence]{Invariance of non-vanishing of first $l^p$-cohomology under $L^q$-Measure Equivalence}

\author[Das]{Kajal Das}
\address{Department of Mathematics \\SRM University AP\\ Andhra Pradesh, India 522240}
\email{kdas.math@gmail.com, kajaldas.m@srmap.edu.in }

\maketitle
\textbf{Abstract:}  The first $l^p$-cohomology is an algebro-analytical object attached to a finitely generated discrete group and  introduced by M. Gromov. It is well known that it is invariant under quasi-isometry. In this article, we prove that the non-vanishing of the first $l^p$-cohomology of a non-amenable group is invariant under $L^q$-Measured Equiavalence , where $q\geq p>1 $.  We prove a weaker version of this result for $q\geq p=1$. We also discuss many corollaries of this result. Some corollaries are new and we reprove the other corollaries . We prove a new result that for hyperbolic (in the sense of Gromov)  groups with boundaries having Combinatorial Loewner Property, conformal dimension (of the canonical conformal gauge) of the Gromov boundary is invariant under $L^q$-Measure Equivalence  for some large $q$. We reprove that the finitely generated free groups and surface groups are not $L^1$-Measure Equivalent. Finally, we discuss $L^q$-Measure Equivalence between non-amenable 3-manifold groups corresponding to Thurston's three geometries 
$\mathbb{H}^3$, $\mathbb{H}^2\times\mathbb{R}$ and $\widetilde{SL_2(\mathbb{R})}$.

\vspace{1cm}

\textbf{Mathematics Subject Classification (2010):} 20F55, 20F69, 20F65, 37A05, 37A15, 37A20, 51F99.

\vspace{1cm}

\textbf{Key terms:}: First $l^p$-cohomology of a group, $L^q$-Measure Equivalence, critical exponent for first $l^p$-cohomology of a group, conformal dimension,  hyperbolic  groups with boundaries having Combnatorial Lowener Property. 

\tableofcontents

\section{Introduction}

\textit{Measure equivalence} (ME) is an equivalence relation on countable groups introduced by Gromov in \cite{Gro93}, as a measure-theoretic analogue of  quasi-isometry (QI). The first detailled study of ME was performed in the work of Furman \cite{Fur99a} in the context of ME-rigidity of lattices in higher rank  simple Lie groups. \textit{$L^p$-measure equivalence} ($L^p$-ME) is defined by imposing an $L^p$-condition on the \textit{cocycle maps} arising from a measure equivalence relation. Such integrability condition first appeared in Margulis's proof of the normal subgroup theorem for irreducible lattices \cite{Ma79}.  $L^p$-measure equivalence is an emerging area in the overlap geometric group theory and measured group theory. The main question in $L^p$-measured group theory is of the following type : 
\begin{center}
  \textit{Which algebraic/geometric properties of groups are invariant under $L^p$-ME?}
\end{center}

In recent history,  there are major works on $L^p$-measure equivalence,  by Y. Shalom in \cite{Sh00} , \cite{Sh00'} (for lattices), \cite{Sh04} (in terms of $L^\infty$-Measure Equivalence)  and later by Bader-Furman-Sauer \cite{BFS13}( in general $L^p$-Measure Equivalence between discrete groups).  Subsequently, it appeared in the work of  T. Austin ( L. Bowen, appendix) \cite{Aus16} , Das-Tessera \cite{DT18}, 
Das \cite{Das18}, \cite{Das18'}, T. Austin \cite{Aus16'}, L. Bowen \cite{Bow17}, Delabie-Koivisto-Ma\^{i}tre-Tessera \cite{DKMT22} and Horbez-Huang\cite{HH23}.  We briefly survey the recent developments . 

\begin{itemize}

\item Shalom has used $L^2$-measure equivalence in \cite{Sh00'} to induce cocycle from unitary representations of a lattice  to the ambient locally compact second countable group. In \cite{Sh04} he proves that if two amenable groups are quasi-isometric, then they are  $L^\infty$-measure equivalence. 

\item In \cite{BFS13} Bader- Furman-Sauer prove  $L^1$-measure equivalence rigidity for lattices in $SO(n,1)$, $n\geq 3$. 

\item  T. Austin proves that  if two nilpotent groups are $L^1$-ME, then their asymptotic cones are bi-Lipschitz isomorphic \cite{Aus16}. In the appendix,  L. Bowen proves that if two finitely generated groups are $L^1$-ME, then they have same `growth types'. 

\item In \cite{DT18} the authors prove that quasi-isometry and Measure Equivalence together  do not imply $L^1$ or $L^\infty$-measure equivalence.

\item L. Bowen proves $L^1$-orbit equivalence rigidity (a variant of Measure Equivalence) for free groups inside the class of accessible' groups \cite{Bow17}. 

\item  T. Austin proves the  invariance of entropy (up to scaling) under  `bounded stable orbit equivalence' and `integrable semi-stable orbit equivalence' (a variant of Measure Equivalence) for amenable groups \cite{Aus16'}.

\item Delabie-Koivisto-Ma\^{i}tre-Tessera  prove in \cite{DKMT22} that $l^p$-isoperimetric profile is stable under $L^p$-Measure Equivalence for all $p\geq 1$.  

\item Marrakchi-de la Salle prove in \cite{MdlS23} that the property of admitting an affine isometric action without fixed points on some $L^p$-space or admitting a proper affine isometric action on some $L^p$-space ($1\leq p<\infty$) by a locally compact group is invariant under $L^q$-ME , where $q\geq p$. 

\item  Let G be a right-angled Artin group with $|Out(G)| < \infty$ and let $H$ be a countable group with bounded torsion. In \cite{HH23} Horbez-Huang proves that if there exists an $(L^1, L^0)$-measure equivalence coupling from $H$ to $G$, then $H$ is finitely generated and quasi-isometric to $G$.

\end{itemize}

 However, it has been proved by Gromov in \cite{Gro93} that if two finitely generated groups $\Gamma$ and $\Lambda$ are QI, then their $k$-th $l^p$-cohomology group $l^pH^k(\Gamma)$ and $l^pH^k(\Lambda)$ are isomorphic as a topological  vector space. We prove this result for $k=1$ under $L^q$-ME in our Main Theorem \ref{mainthm:invcoh}.

\subsection{Statement of the main theorem and the corollaries}
 
\begin{thm}\label{mainthm:invcoh}(\textbf{Main Theorem})
Suppose two non-amenable groups $\Gamma$ and $\Lambda$ are $L^q$-ME for some $q\geq 1$. Then $l^pH^1(\Gamma)\neq 0$ if and only if $l^pH^1(\Lambda)\neq 0$, when $1< p\leq q$. If $p=1$, then $l^pH^1(\Lambda)=0$ implies that all affine actions associated to $l^pH^1(\Gamma)$ have bounded orbits. 
\end{thm}

\begin{rem}
By Proposition \ref{lpH1-coh} and Proposition \ref{monotone}, we obtain that  for finitely generated non-amenable groups $\Gamma$, $l^pH^1(\Gamma)\subseteq l^{p'}H^1(\Lambda)$, where $1\leq  p\leq p' \in\mathbb{R}$. 
The quantitity $p_{*}(\Gamma):= inf \{p : l^pH^1(\Gamma)\neq 0\}$ is called the critical exponent of first $l^p$-cohomology of the group $\Gamma$. Using main theorem \ref{mainthm:invcoh}, in particular, we obtain that if two groups $\Gamma$ and $\Lambda$ are $L^q$-ME,  $q> Max\{p_{*}(\Gamma), p_{*}(\Lambda)\}$ and $p_{*}(\Gamma), p_{*}(\Lambda) >1$, , then  $p_{*}(\Gamma)= p_{*}(\Lambda)$. 
\end{rem}

We prove the Main Theorem in Section 5. It is well-known that the conformal dimension of the boundary of a Gromov hyperbolic group  is invariant under quasi-isometry. In this article, we ask the question for $L^q$-ME. We obtain an affirmative answer for a special class of hyperbolic Coxeter groups. We prove this result as a corollary of our main theorem in Section 6. 

\begin{cor} \label{cor:confdim}
Let $\Gamma$ and $\Lambda$ are two hyperbolic  groups with boundaries having Combinatorial Loewner Property. Suppose, $\Gamma$ and $\Lambda$ are $L^q$-ME,  for $q > Confdim (\Gamma)$, $Confdim (\Lambda)$ and $Confdim (\Gamma)$, $Confdim (\Lambda) >1$ . Then, the conformal dimensions (of the canonical conformal gauge) of the groups are equal. 
\end{cor}

Using our main theorem \ref{mainthm:invcoh}, we also prove the following corollaries in Section 6. These corollaries were already known, but we reprove the results here.  

\begin{cor}\label{cor:f2sg}
The free group $F_n$ with $n\geq 2$ and the surface group $\pi_1(\Sigma_g)$ with genus $g\geq 2$ are not $L^1$-ME. 
\end{cor}

\begin{cor}\label{cor:cesg}
Any cocompact lattice in $Isom(\mathbb{H}^3)$  is not $L^{2+\epsilon}$-ME with the central extensions of surface groups for all $\epsilon >0$.
\end{cor}

\subsection{Acknowledgements}

I am very grateful to Prof. Romain Tessera for  valuable discussions on this project. 
I would like to thank Weizmann Institute of Science, Israel and Indian Statistical Institute, Kolkata,  where most of this work has been carried out and National Board for Higher Mathematics, India, for supporting this project financially. 

I would also like to thank Projet ANR-14-CE25-0004 GAMME  for supporting me financially for conferences which helped me in developing knowledge in Geometric Group Theory and Measured Group Theory. I would like to thank Prof. Antoine Gourney for pointing out a missing  reference.

\section{$L^p$-Measure Equivalence}

Two countable discrete groups $\Gamma$ and $\Lambda$ are called \textbf{Measure Equivalent} (ME) if there is a nonzero $\sigma$-finite measure space $(X,\mu)$, which admits commuting free $\mu$-preserving actions of  $\Gamma$ and $\Lambda$  such that  both have finite-measure fundamental domains $X_{\Gamma}$ and $X_{\Lambda}$, respectively, i.e., 

$$ X= \sqcup_{\gamma\in\Gamma}  \gamma X_\Gamma  =  \sqcup_{\lambda\in\Lambda} \lambda X_\Lambda .$$

$(X,\mu)$ is called the coupling space of $\Gamma$ and $\Lambda$. Let $\alpha:\Gamma\times X_\Lambda\rightarrow \Lambda$  
 (resp.\ $\beta:\Lambda\times X_\Gamma\rightarrow \Gamma$) be the corresponding cocycle defined by the rule: for all $x\in X_{\Lambda}$ and 
 $\gamma\in \Gamma$, $\alpha(\gamma, x)\gamma x \in X_\Lambda$ (and symmetrically for $\beta$). We define $\gamma\cdot x:= \alpha(\gamma, x)\gamma x$ for all $\gamma\in\Gamma$ and $x\in X_\Lambda$ and $\lambda\cdot y:= \beta(\lambda, y)\lambda y$ for all $\lambda\in\Lambda$ and $y\in X_\Gamma$. We have the following cocycle relations: 
  $$\alpha(\gamma_1\gamma_2,x)=\alpha(\gamma_1,\gamma_2\cdot x)\alpha(\gamma_2,x)\hspace*{.5cm} \text{and} \hspace*{.5cm}  \beta(\lambda_1\lambda_2,x)=\beta(\lambda_1,\lambda_2\cdot x)\beta(\lambda_2,x)$$

 where $\gamma_1,\gamma_2\in \Gamma$, $x\in X_\Lambda$, $\lambda_1,\lambda_2\in\Lambda$ and $y\in X_\Lambda$. If,  for any $\lambda\in \Lambda$ and  $\gamma\in \Gamma$, the integrals 
 $$\int_{X_\Lambda}|\alpha(\gamma,x)|^p d\mu(x) \hspace*{.5cm} \text{and} \hspace*{.5cm} \int_{X_\Gamma}|\beta(\lambda,x')|^pd\mu(x')$$  are finite, then the groups are called \textbf{$L^p$-measure equivalent}.  The strongest form is when $p=\infty$, in which case the coupling is called {\it uniform}, and the groups {\it uniformly measure equivalent (UME)}, as it generalizes the case of two uniform lattices in a same locally compact group. For $p=1$, the coupling is called {\it integrable}, and the groups are said to be  {\it integrable measure equivalent (IME)}.

\vspace{0.5cm}

\begin{rem}
Measure Equivalence is closely related to the concept of Orbit Equivalence (OE) in ergodic theory. Suppose $\Gamma$ and $\Lambda$ are Measure Equivalence with $X_\Gamma=X_\Lambda$. Then the groups are orbit equivalent. 
Moreover, two groups are ME if and only if they are `weakly OE' or `stably OE'
\end{rem}

\section{The first $l^p$-cohomology of groups}

\vspace{0.5cm}

\subsection{Definition of $l^p$-cohomology} 

We fix $p\in [1,\infty)$. Let $\Gamma$ be a finitely generated group and  fix a Cayley graph of $\Gamma$. Suppose $V_\Gamma$ and $E_\Gamma$ are the vertex set and the edge set of the Cayley graph, respectively.

``The first $l^p$-cohomology group of $\Gamma$'', denoted by $l^p H^1(\Gamma)$, will be defined as a quotient of the space of functions $f : V_\Gamma \rightarrow \mathbb{R}$.   We follow the definition given in \cite{MT10}. 

Let $l^p(V_\Gamma)$ (respectively $l^p(E_\Gamma)$) denote the collection of $p$-summable functions
$f : V_\Gamma \rightarrow \mathbb{R}$ (respectively $g : E_\Gamma\rightarrow \mathbb{R}$).
 
The differential of  $f : V_G \rightarrow \mathbb{R}$, denoted $df$, is the map from $E_\Gamma$ to $\mathbb{R}$ defined by $df(ab) = f(b) - f(a)$. Since $V_\Gamma$ has bounded valence, $df \in l^p(E_\Gamma)$,  whenever $f \in l^p(V_\Gamma)$. Since $V_\Gamma$ is connected, $df=0$ if and only if $f$ is a constant.

\begin{defn}
The ``first $l^p$ cohomology group'' of $\Gamma$ written $l^pH^1(\Gamma)$, is defined as follows:  $ \{f :V_\Gamma\rightarrow \mathbb{R} : df \in l^p(E_\Gamma)\}/ (l^p(V_\Gamma)\bigoplus\mathbb{R}\cdot \textbf{1})$ ,
where $\textbf{1}$ denotes the constant function: $\textbf{1}(v) = 1$ for all $v\in V_\Gamma$. 
\end{defn}

Similarly, we define ``first reduced $l^p$ cohomology group'' $\Gamma$. 

\begin{defn}
The ``first reduced $l^p$ cohomology group'' of $\Gamma$ written $l^p\bar{H}^1(\Gamma)$, is defined as follows:  $ \{f :V_\Gamma\rightarrow \mathbb{R} : df \in l^p(E_\Gamma)\}/ \overline{(l^p(V_\Gamma)\bigoplus\mathbb{R}\cdot \textbf{1})}$ ,
where $\textbf{1}$ denotes the constant function: $\textbf{1}(v) = 1$ for all $v\in V_\Gamma$ and the closure is with respect to the topology of point-wise convergence. 

\end{defn}
 
 We remark that the isomorphism type of $l^pH^1(\Gamma)$ is a quasi-isometry invariant of the underlying graph $V_\Gamma$, and thus an invariant of $\Gamma$. We will write $l^pH^1(\Gamma)$  for the isomorphism class of first $l^p$-cohomology groups associated to $\Gamma$. 
 
 \vspace{0.5cm}

\subsection{List of some known results of first $l^p$-cohomology of discrete groups:}
 \begin{itemize}
 \item Suppose $A$ and $B$ are two finitely generated groups with $|A|\geq 2$ and $|B|\geq 3$. Then 
 $\Gamma=A\star B$ satisfies $l^pH^1(\Gamma)\neq 0$ for all $p\geq 1$ (see \cite{Bou10}, Proposition 1.1). In particular, for the free group with finite number generators, the first $l^p$-cohomology is non-zero for all $p\geq 1$ . 
 \item Suppose $\Gamma$ is a cocompact lattice in $Isom(\mathbb{H}^n)$, where $n\geq 2$. 
 Then $l^pH^1(\Gamma)= 0$ iff $p\in [1,n-1]$. (see \cite{Pan89'}, Proposition 2.1)
 \item Let $\Gamma$ be a word-hyperbolic group. Then for $p$ large enough we have $l^pH^1(\Gamma)\neq 0$ (see  \cite{Bou10} Proposition 2.3, \cite{Yu05}). 
 \end{itemize}
 
 \vspace{0.5cm}
 
\subsection{1-cohomology with coeffcients in a Banach Space}

Let $\Gamma$ be a discrete group. Let $\pi$ be an isometric representation of $\Gamma$ on a Banach space $B$. 

\begin{itemize}
\item[1.] A mapping $b:\Gamma \rightarrow B$ s.t.  $b(gh) = b(g) + \pi(g)b(h)$, for all $g, h \in \Gamma$
is called a \textbf{1-cocycle} with respect to $\pi$.
\item[2.] A 1-cocycle $b : \Gamma \rightarrow B$ for which there exists $\xi\in B$ such that
$b(g)=\pi(g)\xi - \xi$, for all $g\in \Gamma$, is called a \textbf{1-coboundary} with respect to $\pi$. 
\item[3.]  The quotient space $H^1(\Gamma,\pi) = Z^1(\Gamma,\pi)/B^1(\Gamma,\pi)$ is called 
\textbf{1-cohomology} associated to the representation $\pi$. 
The \textbf{reduced 1-cohomology} of $\Gamma$ associated to the representation $\pi$ is the quotient vector space $H^1(\Gamma,\pi) = Z^1(\Gamma,\pi)/\overline{B}^1(\Gamma,\pi)$, where the closure is taken w.r.t. the pointwise convergent topology. Sometimes, we use the notations 
$H^1(\Gamma,B)$ and  $\overline{H}^1(\Gamma, B)$ for 1-cohomology and reduced 1-cohomology, respectively. 
\end{itemize}

The following proposition relates the first $l^p$-cohomology of a group $\Gamma$ with the 1-cohomology with coefficients in $l^p(\Gamma)$.  

\begin{prop}\label{lpH1-coh}  $l^pH^1(\Gamma)=H^1(\Gamma, l^p(\Gamma))$, where $\rho_\Gamma$ is acting on $l^p(\Gamma)$ by  right regular action. Similarly, $l^p\overline{H}^1(\Gamma)=\overline{H}^1(\Gamma, l^p(\Gamma))$, where $p\geq 1$,
\end{prop} 
\begin{proof}
We briefly sketch the proof. Let $f:V_\Gamma \rightarrow \mathbb{R}$ be a representative of a  class in $l^pH^1(\Gamma)$.
Suppose $S$ is a generating subset of $\Gamma$.  
We define $b:\Gamma\rightarrow l^p(\Gamma)$ by $b(s)(\gamma)=f(v_{\gamma s})-f(v_{\gamma})$. On the other hand, if 
we have a cocycle $b:\Gamma \rightarrow l^p(\Gamma)$, we define $f(e_{\gamma,\gamma s})= b(s)(\gamma)$, where 
 $e_{\gamma,\gamma s}$ is the edge between $v_{\gamma s}$ and $v_\gamma$ indexed by $s$. 
\end{proof}

We remark that the 1-cohomology of a group $\Gamma$ with coefficients in $l^p(\Gamma)$ is monotone in the following sense. 

\begin{prop}(\cite{Loh90},\cite{Pul06} Proposition 6.2) \label{monotone}
 Let $\Gamma$ be a finitely generated non-amenable group. If $1 < p \leq p' \in\mathbb{R}$, then $H^1\big(\Gamma, l^p(\Gamma)\big) \subseteq H^1\big(\Gamma, l^{p'}(\Gamma)\big)$
\end{prop}

Using Proposition \ref{monotone} and Proposition \ref{lpH1-coh}, we obtain that the first $l^p$-cohomology is monotone (i.e.,  if $p\leq p'$,  $l^pH^1(\Gamma)\subseteq l^{p'}H^1(\Gamma))$ . Now, we mention Theorem \ref{lpbarlp} and Proposition \ref{dirint} which will be useful in the proof of our main theorem \ref{mainthm:invcoh}.

\begin{thm}\label{lpbarlp}
 Suppose  $p\in (1,\infty)$. TFAE: 
 \begin{itemize}
\item[(a)] $\Gamma$ is non-amenable,
\item[(b)] $l^pH^1(\Gamma) = l^p\overline{H}^1(\Gamma)$,
\end{itemize}
\begin{proof}
We refer the readers to \cite{Bou10} ( Theorem 1.5, p. 9 ) for the proof of the proposition. 
\end{proof}
\end{thm}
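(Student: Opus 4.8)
The plan is to reformulate (b) as a closed-range property of the coboundary operator and then recognize the failure of that property as the existence of almost-invariant vectors, which is exactly the amenability of $\Gamma$. Throughout I would use Proposition \ref{lpH1-coh} to identify $l^pH^1(\Gamma)$ with $H^1(\Gamma, l^p(\Gamma))$ for the right regular representation $\rho_\Gamma$, and fix the finite generating set $S$.

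First I would pin down the topology. A $1$-cocycle $b$ is completely determined by its values $(b(s))_{s\in S}$, and the cocycle relation $b(gh)=b(g)+\rho_\Gamma(g)b(h)$ together with the fact that $\rho_\Gamma$ acts isometrically propagates norm convergence on $S$ to norm convergence of $b(g)$ at every $g\in\Gamma$. Hence, on the space of cocycles, the pointwise-convergence topology used in the definition of reduced cohomology coincides with the product-norm topology on $(b(s))_{s\in S}\in l^p(\Gamma)^{|S|}$. Consequently $l^pH^1(\Gamma)=l^p\bar H^1(\Gamma)$ if and only if the coboundary space $B^1(\Gamma, l^p(\Gamma))$ is closed in this norm topology.

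Next I would analyze the coboundary operator
$$d: l^p(\Gamma)\longrightarrow l^p(\Gamma)^{|S|}, \qquad d\xi = \big(\rho_\Gamma(s)\xi - \xi\big)_{s\in S},$$
whose image is precisely $B^1$. Since $\Gamma$ is infinite, the only $\rho_\Gamma$-invariant vector in $l^p(\Gamma)$ is $0$, so $d$ is injective. For an injective bounded operator between Banach spaces, closed range is equivalent to being bounded below, i.e. to the existence of $c>0$ with $\sum_{s\in S}\|\rho_\Gamma(s)\xi-\xi\|_p \geq c\,\|\xi\|_p$ for all $\xi$. Thus (b) holds precisely when $d$ is bounded below, and the negation is the existence of unit vectors $\xi_n\in l^p(\Gamma)$ with $\|\rho_\Gamma(s)\xi_n-\xi_n\|_p\to 0$ for every $s\in S$ — that is, almost-invariant vectors for $\rho_\Gamma$. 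Invoking the classical characterization of amenability (Følner sets give almost-invariant vectors $\mathbf{1}_{F_n}/|F_n|^{1/p}$, with the converse obtained by a Mazur-map transfer to the $l^1$ case), one has: $\Gamma$ amenable $\iff$ $\rho_\Gamma$ admits almost-invariant vectors $\iff$ $d$ is not bounded below $\iff$ $B^1$ is not closed $\iff$ $l^pH^1(\Gamma)\neq l^p\bar H^1(\Gamma)$. Taking the contrapositive yields the equivalence of (a) and (b).

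The step I expect to be the main obstacle is the amenability$\,\Leftrightarrow\,$almost-invariant-vectors equivalence in the genuinely $l^p$ (rather than Hilbertian $l^2$) setting: the Mazur-map argument transferring between $l^p$-almost-invariance and the Følner condition requires some care, as does verifying quantitatively that non-amenability forces the uniform lower bound defining boundedness below. The remaining ingredient needing attention is the matching of the pointwise-convergence topology of reduced cohomology with the product-norm topology in which closed range is naturally expressed, which is why I would settle that identification at the outset.
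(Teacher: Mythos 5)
The paper does not actually prove this statement: its ``proof'' is a citation to \cite{Bou10} (Theorem 1.5), so there is no internal argument to compare against. Your proposal supplies a genuine proof, and it is correct; moreover it is the standard Guichardet-type argument, which is in substance what the cited reference does. The chain you set up is sound: restriction to generators identifies $Z^1$ (with the pointwise topology) with a subspace of $l^p(\Gamma)^{|S|}$, equality of reduced and unreduced cohomology is exactly closedness of $B^1$, closedness of the range of the injective coboundary operator $d$ is equivalent to a uniform lower bound, and failure of that bound is the existence of almost-invariant vectors, which characterizes amenability via F{\o}lner sets in one direction and the Mazur-map transfer to $l^1$ (Reiter's property) in the other. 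Two points should be made explicit. First, the equivalence as stated is false for finite $\Gamma$: both cohomologies vanish, so (b) holds while (a) fails; your step ``since $\Gamma$ is infinite, $d$ is injective'' is precisely where infiniteness enters, so the hypothesis that $\Gamma$ is infinite must be added (harmless here, since the paper only applies the theorem to infinite hyperbolic Coxeter groups). Second, passing between ``$B^1$ is closed in $Z^1$'' and ``$d$ has closed range in $l^p(\Gamma)^{|S|}$'' tacitly uses that $Z^1$ itself is closed in $l^p(\Gamma)^{|S|}$; this follows from the same propagation argument you use to match the topologies (a norm-limit of cocycle tuples on generators extends, via the cocycle relation along words, to a limit cocycle), but it deserves a sentence. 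With those two additions your argument is a complete, self-contained replacement for the paper's external citation, which is a net gain in rigor for the reader; the citation's only advantage is brevity.
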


\begin{prop}\label{dirint}
 If $\overline{H}^1(\Lambda, \pi)= 0$, then $\overline{H}^1\big(\Lambda, \int_X^{\oplus_p} \pi^{(x)} d\mu(x)\big)= 0$ where $\Lambda$ is a discrete group, $\pi^{(x)}=\pi$ for all $x$ and $\pi$ is the left-regular or right-regular representation of $\Lambda$ on $l^p(\Lambda)$, where $1\leq p <\infty$. 
\end{prop}
\begin{proof}
 We refer the readers to  Theorem \ref{dirint1} of the Appendix of this article for the proof of this proposition. 
\end{proof}

 \section{Induced representation and Induced affine action}
  
\subsection{Induced representation}  
  
We assume that $\Gamma$ and $\Lambda$ are Measure Equivalent (not necessarily $L^p$-ME) as in Section 2.
 Given $1\leq p\leq \infty$ and some isometric representation $\pi$ of $\Gamma$ on a Banach space $\mathcal{B}$, we define the induced representation $\Ind_\Gamma^{p,\Lambda}\pi$ to $\Lambda$ on
$$L^p(X_\Gamma,\mathcal{B}):=\{\psi:X_\Gamma\rightarrow \mathcal{B} \hspace*{.2cm}  |  \int_{X_\Gamma}|\psi|^p d\mu<\infty\} \hspace*{.2cm} \psi \mbox{ is Bochner-measurable}$$
in the following way:
$$\lambda\psi(x)=\pi(\beta(\lambda^{-1},x)^{-1})\psi(\lambda^{-1}\cdotp x).$$
We refer the reader to the appendix for the definition of `Bochner-measurable function'. Observe that this is a ``linear induction", and no integrability assumption on the coupling is required. 
When $p$ is clear from the context, we denote the induced representation by $\Ind_\Gamma^{\Lambda}\pi$.

  \vspace{0.5cm}

  \subsection{Induced cocycle in the induced representation}

 Let $b:\Gamma\rightarrow \mathcal{B} $ be a 1-cocycle coming from $Z^1(\Gamma, \mathcal{B})$, where $\Gamma$ is acting on $\mathcal{B}$ by $\pi$ as in the previous section. We consider the induced representation $\Ind_\Gamma^{p,\Lambda}\pi$ of $\Lambda$ with coefficients in $L^p(X_\Gamma, \mathcal{B})$.  We assume that $\Gamma$ and $\Lambda$ are $L^q$-ME, where $q\geq p$.

 We define 1-cocycle $B\in Z^1(\Lambda, L^p(X_\Gamma, \mathcal{B})$ associated to $\Ind_\Gamma^{p,\Lambda}\pi$ by the following way: 

\begin{equation}
   B(\lambda)(y):= b[\beta(\lambda^{-1}, y)^{-1}],  
\end{equation}   

where $\lambda\in \Lambda$, $y\in X_\Gamma$.

\begin{prop}
 $B$ is well-defined, i.e., $B$ is $p$-integrable, and $B$ is a cocycle. 
\end{prop}
\begin{proof}
We fix $\lambda\in\Lambda$ and we suppose that $\Gamma$ is generated by a symmetric generating set $S=\{s_1, \dots, s_k\}$. 

\begin{align*}
\int_{X_\Gamma} \| B(\lambda)(y)\|^p d\nu(y) 
  & = \int_{Y} \| b[\beta(\lambda^{-1}, y)^{-1}] \|^p d\nu(y) \\
  & \leq  max \{\|b(s_i)\|^p\}_{i=1}^k \int_{X_\Gamma} \mid \beta(\lambda^{-1}, y)\mid ^p d\nu(y) <\infty  
\end{align*}  
In the above inequality, we use the properties  of 1-cocycle $b$ and $p$-integrability of $\beta$. Now, we check that $B$ is a 1-cocycle. We observe that $\{B(\lambda)\}(y)=b[\{\beta(\lambda^{-1},y)\}^{-1}]=b[\beta(\lambda,\lambda^{-1}\cdot y)]$ using the cocycle relation for $\beta$ . Therefore, 

\begin{align*}
\{B(\lambda_1\lambda_2)\}(y)
& = b[\beta(\lambda_1\lambda_2,\lambda_2^{-1}\lambda_1^{-1}\cdot y)]\\
& = b[\beta(\lambda_1,\lambda_2\lambda_2^{-1}\lambda_1^{-1}\cdot y)\beta(\lambda_2, \lambda_2^{-1}\lambda^{-1}\cdot y)]\\
& = b[\beta(\lambda_1,\lambda_1^{-1}\cdot y)]+\pi\big(\beta(\lambda_1,\lambda_1^{-1}\cdot y)\big) b\big(\beta(\lambda_2,\lambda_2^{-1}\lambda_1^{-1}\cdot y)\big)\\
& = \{B(\lambda_1)\}(y)+ \pi\big(\beta(\lambda_1,\lambda_1^{-1}\cdot y)\big) [\{B(\lambda_2)\}(\lambda_1^{-1} \cdot y)]\\
& = \{B(\lambda_1)\}(y)+ [\{\Ind_\Gamma^\Lambda\pi(\lambda_1)\}B(\lambda_2)](y)
\end{align*}

Hence, $B\in Z^1\big(\Lambda, L^p(X_\Gamma, \mathcal{B})\big)$. 
\end{proof}

\subsection{Induced representation from right-regular action}  

 We define a measure preserving isomorphism $\psi:\Lambda\times X_\Lambda \rightarrow \Gamma\times X_\Gamma$ by
  $$(\lambda',x')\mapsto \big(\gamma' \beta(\lambda',y')^{-1},\lambda'\cdot y'\big),$$
  where $x'= \gamma'y'$, $\gamma'\in\Gamma$ and $y'\in X_\Gamma$. It is easy to see that $\Psi$ induces an isometric isomorphism $\Psi: L^p(X_\Gamma,l^p(\Gamma))\rightarrow L^p(X_\Lambda,l^p(\Lambda))$, for $p\geq 1$.
  
  \begin{lem}\label{indright}
   $\Psi\circ[Ind_\Gamma^\Lambda \rho_\Gamma(\lambda)]=[\int_{X_\Lambda}^{\oplus_p}\tau_\Lambda(\lambda)]\circ \Psi$, where $\rho_\Gamma$ is the right-regular representation of $\Gamma$ and $\tau_\Lambda$ is the left-regular representation of $\Lambda$.
 \end{lem}
  \begin{proof}
  For $x'\in X_\Lambda$ and $\lambda\in\Lambda$, we have
  \begin{align*}
  [\{\Psi\circ Ind_\Gamma^\Lambda \rho_\Gamma(\lambda)f\}(x')](\lambda')
  & = [\{Ind_\Gamma^\Lambda \rho_\Gamma(\lambda)f\}(\lambda'\cdot y')]\big(\gamma'\beta(\lambda',y')^{-1}\big)\\
   & =\{ f(\lambda^{-1}\lambda'\cdot y')\}\big(\gamma'\beta(\lambda',y')^{-1}\beta(\lambda^{-1},\lambda'\cdot y')^{-1}\big)\\
  & = \{ f(\lambda^{-1}\lambda'\cdot y')\} \big(\gamma' \beta(\lambda^{-1}\lambda',y')^{-1}\big)\\
  & = [\{\big(\int_{X_\Lambda}^{\oplus_p}\tau_\Lambda(\lambda)\circ \Psi\big)(f)\}(x')](\lambda')
 \end{align*}

Hence, we have our lemma. 
  \end{proof}

\subsection{Induced affine action} 

As in Subsection 4.2, we assume that the 1-cocycle $B$ is  induced from the 1-cocycle $b\in Z^1(\Gamma,\mathcal{B})$ and associated to the induced representation $\Ind_\Gamma^\Lambda \pi$. We denote the affine action of $\Gamma$ associated to $b$ by $\eta$ and the affine action of $\Lambda$ on $L^p(X_\Gamma,\mathcal{B})$ associated to $B\in Z^1(\Lambda, \Ind_\Gamma^\Lambda \pi)$ by $\Ind_\Gamma^\Lambda \eta$. 

We now consider the setting in the proof of Theorem 7.4 in \cite{MdlS23}.  Since the coupling space $X$ is equal to $\Gamma  X_\Gamma$,   we can define $p_1:X\rightarrow \Gamma$ by $w=\gamma x\mapsto \gamma$ for all $w=\gamma x \in X$, where $\gamma\in\Gamma$ and $x\in X_\Gamma$.  
Let $L_0(X,\mu, \mathcal{B})^{\Gamma}$ be the set of all $\Gamma$-equivariant Bochner-measurable maps from $Z$ to $\mathcal{B}$, where $\Gamma$ acts on $\mathcal{B}$ by affine action $\eta$ (we refer the readers to the second paragraph of the Appendix for the definition of Bochner measurable function.  We define $f_0\in L_0(X,\mu, \mathcal{B})^{\Gamma}$ by $f_0(w)=p_1(w)\cdot 0=b(\gamma)$, where $w=\gamma x$, $\gamma\in\Gamma$ and $x\in X_\Gamma$. Now, we define an affine subspace 

$$F=\{f\in  L_0(X,\mu, \mathcal{B})^{\Gamma} : \|f-f_0\|_{p,\Gamma}<\infty \}.$$

We further define $\Phi: F\rightarrow L^p(X_\Gamma,\mathcal{B})$ by $f\mapsto (f-f_0)\vert_{X_\Gamma}=f\vert_{X_\Gamma}$. There is the following natural affine action $\zeta$ of $\Lambda$ on $F$: $[\zeta(\lambda)f](w):=f(\lambda^{-1}w)$ for all $\lambda\in \Lambda$ and $f\in F$. On the other hand, there is the affine action $\Ind_\Gamma^\Lambda \eta$ of $\Lambda$ on $L^p(X_\Gamma,\mathcal{B})$. 

\begin{lem}\label{equiaffine}
$\Phi$ is equivariant under the affine action $\zeta$ of $\Lambda$ on $F$ and the affine action $\Ind_\Gamma^\Lambda \eta$ of $\Lambda$ on $L^p(X_\Gamma,\mathcal{B})$.
\end{lem}
\begin{proof}
%We need to prove that for all $x\in X_\Gamma$ $$\{\zeta(\lambda)f-f_0\}(x)=\pi\big(\beta(\lambda^{-1},x)^{-1}\big)\{(f-f_0)(\lambda^{-1}\cdot x)\}.$$

For all $x\in X_\Gamma$, 

\begin{align*}
\{\zeta(\lambda)f\}(x)
& = f (\lambda^{-1} x)\\
& = f [\beta(\lambda^{-1},x)^{-1}\big( \lambda^{-1}\cdot x\big)]\\
& =\beta(\lambda^{-1},x)^{-1}\cdot f(\lambda^{-1}\cdot x)\\
& = \pi\big(\beta(\lambda^{-1},x)^{-1}\big) f(\lambda^{-1}\cdot x)+b\big(\beta(\lambda^{-1},x)^{-1}\big)\\
& = [\Ind_\Gamma^\Lambda \eta(\lambda) f](x)
\end{align*}
Hence, we have our lemma. 
\end{proof}

 \vspace{0.5cm}

\section{Proof of the main theorem }\label{proof}

In this section, we prove our main theorem \ref{mainthm:invcoh}. 

\begin{proof}

  Suppose $\Gamma$ and $\Lambda$ are $L^q$-ME,  where $q \geq 1$.  Let $(X,\mu)$ be the coupling space, and $X_\Gamma$ and  $X_\Lambda$ be fundamental domains 
of $\Gamma$ and $\Lambda$ respectively. Moreover, we assume that  $\alpha:\Gamma \times X_\Lambda \rightarrow \Lambda$ and $\beta:\Lambda \times X_\Gamma \rightarrow \Gamma$ are $L^q$-integrable cocycles.   Suppose  $l^p H^1(\Gamma)\neq 0$, where $q\geq p\geq 1$.  We prove that $l^pH^1(\Lambda)\neq 0$. From Step 1 to Step 3, we prove our theorem for $1< p <\infty$, and in Step 4 we prove our theorem for $p=1$.

\vspace{5mm}
 
 \textbf{Step 1:}
 
 \vspace{5mm}
 
By Proposition \ref{lpH1-coh}, we obtain that $H^1(\Gamma, l^p(\Gamma))=l^p H^1(\Gamma)\neq 0$, $\Gamma$ acts on $l^p(\Gamma)$ by right-regular action $\rho_\Gamma$. Now, we will prove that $H^1\big(\Lambda, Ind_\Gamma^\Lambda(\rho_\Gamma)\big)\neq 0$. This proof is a particular case of Theorem 7.4 (i) in  \cite{MdlS23}.  For the sake of completeness, we give a proof of this result here. Let $b\in Z^1\big(\Gamma, l^p(\Gamma)\big)$ is a non-zero cocycle class in $H^1(\Gamma, l^p(\Gamma))$. We use the notations use in the Subsections 4.1, 4.2, 4.3 and 4.4 (the Banach space $\mathcal{B}$ is replaced by $l^p(\Gamma)$ and the representation $\pi$ is replaced by $\rho_\Gamma$). We assume by contradiction that $H^1\big(\Lambda, Ind_\Gamma^\Lambda(\rho_\Gamma)\big)= 0$. We consider the affine action associated to the representation $\Ind_\Gamma^\Lambda \rho_\Gamma$ and cocycle $B$.  By our assumption, this action has a fixed point in $L^p(X_\Gamma, l^p(\Gamma))$. By Lemma \ref{equiaffine}, the affine action $\zeta$ of $\Lambda$ has a fixed point in $F$. Let $f\in F$ be a fixed point for $\zeta$. It implies that $f: X_\Lambda\rightarrow l^p(\Gamma)$ is a $\Gamma$-equivariant measurable map which is also $\Lambda$-invariant. 
Therefore, we can see $f$ as a $\Gamma$-equivariant map from $X_\Lambda$ to $l^p(\Gamma)$. Since $X_\Lambda$ admits a $\Gamma$-invariant probability measure $\mu_\Lambda$, we can push it forward by $f$ and obtain a $\Gamma$-invariant measure on $l^p(\Gamma)$. By Lemma 2.14 in \cite{BFGM07},  when $1< p <\infty$ , we obtain that $\eta$ has a fixed point in $l^p(\Gamma)$, which is a contradiction.

\vspace{5mm}
 
\textbf{Step 2:} 
 
\vspace{5mm}

By Lemma \ref{indright}, we obtain that $\Psi\circ[Ind_\Gamma^\Lambda \rho_\Gamma(\lambda)]=[\int_{X_\Lambda}^{\oplus_p}\tau_\Lambda(\lambda)]\circ \Psi$, for all $\lambda\in\Lambda$. Since $\Lambda$ is non-amenable, by Lemma 2 of \cite{BMV05}  $\int_{X_\Lambda}^{\oplus_p} \tau_\Lambda$ does not have almost invariant vectors. By Th\'{e}or\`{e}me 1 in \cite{Gui72}, we have the following result: Let $G$ be a locally compact group and $\Gamma \curvearrowright_\pi B$ be an isometric $G$-representation. Assume that $\pi$ does not weakly contain the trivial representation. Then $\overline{H}_{ct}^1(G,\pi)=H_{ct}^1(G,\pi)$. 
Therefore,  we obtain that 

\begin{align*}
H^1\big(\Lambda, Ind_\Gamma^\Lambda \rho_\Gamma, L^p(X_\Gamma, l^p(\Gamma))\big)
  & = H^1\big(\Lambda, \int_{X_\Lambda}^{\oplus_p}\tau_\Lambda, L^p(X_\Lambda, l^p(\Lambda))\big)\\
  & \overline{H}^1\big(\Lambda, \int_{X_\Lambda}^{\oplus_p}\tau_\Lambda, L^p(X_\Lambda, l^p(\Lambda))\big). 
\end{align*}

\vspace{5mm}

\textbf{Step 3:}

\vspace{5mm}

 Since $\overline{H}^1\big(\Lambda, \int_{X_\Lambda}^{\oplus_p}\tau_\Lambda, L^p(X_\Lambda, l^p(\Lambda))\big) \neq 0$, using Proposition \ref{dirint},  we obtain that $\overline{H}^1(\Lambda, \tau_\Lambda)\neq 0$. Now, we define $\chi: l^p(\Lambda) \rightarrow l^p(\Lambda)$ by extending the map $\lambda\mapsto \lambda^{-1}$ for all $\lambda\in\Lambda$. Observe that $\chi$ is $\Lambda$-equivariant, where $\Lambda$ acts on the domain and the range by left-regular representation $\tau_\Lambda$ and right-regular representation $\rho_\Lambda$, respectively.  Since $\chi$ is a $\Lambda$-equivariant invertible isometry, we have  $\overline{H}^1(\Lambda, \tau_\Lambda)=\overline{H}^1(\Lambda, \rho_\Lambda)$.  Finally, using Proposition  \ref{lpH1-coh} and Theorem \ref{lpbarlp}, we have  $l^pH^1(\Lambda)\neq 0$ for $1< p <\infty$. 
 
 \vspace{5mm}
 
 \textbf{Step 4:}
 
 \vspace{5mm}
 
 Now, we deal with case $p=1$. Since Step 2 and Step 3 hold for any $p\geq 1$, using these two steps we obtain that $H^1\big(\Lambda, Ind_\Gamma^\Lambda(\rho_\Gamma)\big)\neq 0$ implies 
 \linebreak
 $\overline{H}^1\big(\Lambda, \int_{X_\Lambda}^{\oplus_p}\tau_\Lambda, L^p(X_\Lambda, l^p(\Lambda))\big) \neq 0$, and which further implies that 
 $l^pH^1(\Lambda)\neq 0$. Therefore, contra-positively,  $l^pH^1(\Lambda)= 0$ implies that $H^1\big(\Lambda, Ind_\Gamma^\Lambda(\rho_\Gamma)\big)= 0$. Now, using Step 1, we obtain that there exists a $\Gamma$-invariant measure on $l^p(\Gamma)$. Since the argument of the proof of Lemma 2.14 $ (4)\Rightarrow (1)$  in \cite{BFGM07} holds for $p=1$,  we obtain that $\eta$ has bounded orbits in $l^p(\Gamma)$. Hence we have our theorem for $p=1$. 

\end{proof}

\section{Applications}

 \subsection{Conformal dimension and first $l^p$-cohomology of groups}

 There is an excellent result due to  Bourdon and Pajot \cite{BP03}, which relates geometric quantity `conformal dimension' of the boundary of a class of hyperbolic groups and  `algebro-analytical quantity' the first $l^p$-cohomology' of this class of groups. 
 %We use this result to prove corollary \ref{cor:confdim}

\begin{thm}\cite{BP03}(Bourdon-Pajot, Corollaire 0.4)\label{lpcohconfd}
Let $\Gamma$ be  a hyperbolic  group without torsion and with boundaries having  `combinatorial Loewner property' (CLP). Then  $$Confdim{\Gamma}= inf\{p\neq 0 : l^pH^1(\Gamma)\neq 0\}$$
\end{thm}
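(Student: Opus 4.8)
The plan is to establish the two inequalities $\mathrm{Confdim}(\partial\Gamma)\le p_c$ and $\mathrm{Confdim}(\partial\Gamma)\ge p_c$, where $p_c:=\inf\{p\neq 0: l^pH^1(\Gamma)\neq 0\}$ (the relevant range being $p>1$), and to route both through an intermediate quantity: the critical exponent $Q_M$ of the combinatorial $p$-modulus of $\partial\Gamma$. I first record two reductions. Since a hyperbolic Coxeter group whose boundary carries the CLP is non-amenable, Theorem \ref{lpbarlp} identifies reduced and unreduced cohomology, $l^pH^1(\Gamma)=l^p\bar H^1(\Gamma)$; this separatedness is used decisively below. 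Second, it is a standard consequence of the CLP that the conformal dimension of $\partial\Gamma$ agrees with its Ahlfors-regular conformal dimension $\mathrm{Confdim}_{AR}(\partial\Gamma)$ and is attained, so it suffices to compare $p_c$ with $\mathrm{Confdim}_{AR}(\partial\Gamma)$.

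The first half of the argument identifies $p_c$ with $Q_M$ by passing to the boundary. Using Proposition \ref{lpH1-coh} I represent a class of $l^pH^1(\Gamma)$ by a $1$-cocycle into $l^p(\Gamma)$, equivalently a function on $V_\Gamma$ with $l^p$ differential, and then (via hyperbolicity and the Bourdon--Pajot boundary-trace theory) by a trace $u$ on $(\partial\Gamma,\rho)$ lying in a Besov-type space $B_p(\partial\Gamma)$, with the $l^p$-energy of $df$ comparable to the Besov seminorm of $u$. By Theorem \ref{lpbarlp} a nonzero class is detected by a genuinely non-constant trace, not merely an $l^p$-limit of coboundaries, so $l^pH^1(\Gamma)\neq 0$ if and only if $B_p(\partial\Gamma)$ contains non-constant functions. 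Next I match this with modulus: by the $l^p$--$l^{p'}$ duality between $Mod_p(A,B,G_k)$ for the family of curves joining two disjoint continua and the discrete $p$-capacity of the condenser $(A,B)$, the existence of a non-constant finite-energy trace separating $A$ from $B$ is equivalent to the appropriate (positivity/finiteness) behaviour of $Mod_p(A,B,G_k)$ at exponent $p$. Letting the continua vary and $k\to\infty$ shows that the threshold $p_c$ coincides with the combinatorial-modulus critical exponent $Q_M$; the non-amenability input is exactly what forces the two critical exponents to coincide rather than merely obey an inequality.

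The second half uses the CLP to identify $Q_M$ with $\mathrm{Confdim}_{AR}(\partial\Gamma)$. Here the defining two-sided estimate $\phi(\Delta(A,B)^{-1})\le Mod_p(A,B,G_k)\le \psi(\Delta(A,B)^{-1})$ enters. From the upper bound I construct, at exponents $p$ slightly above $Q_M$, an admissible density whose associated deformed metric is Ahlfors $p$-regular and quasisymmetric to $\partial\Gamma$, giving $\mathrm{Confdim}_{AR}(\partial\Gamma)\le Q_M$; the lower bound $\phi$ prevents the separating modulus from degenerating below $Q_M$, which yields the reverse $\mathrm{Confdim}_{AR}(\partial\Gamma)\ge Q_M$. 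Combining $\mathrm{Confdim}=\mathrm{Confdim}_{AR}$ with $\mathrm{Confdim}_{AR}=Q_M$ and $Q_M=p_c$ gives $\mathrm{Confdim}(\partial\Gamma)=p_c$, as claimed.

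The hard part is the last step, and specifically obtaining the \emph{sharp} two-sided comparison of critical exponents rather than a one-sided bound: building an Ahlfors-regular metric of dimension exactly $Q_M$ out of the extremal modulus densities, and controlling its quasisymmetry type, relies essentially on the lower Loewner bound $\phi$ in the CLP, without which only $p_c\le\mathrm{Confdim}_{AR}$ survives. A secondary but delicate point is the calibration in the boundary-trace step: one must choose the Besov parameters so that the $l^p$-energy on $\Gamma$ matches the modulus normalization on $\partial\Gamma$ uniformly across the scales $G_k$, which is where the visual-metric/hyperbolicity estimates must be made quantitative.
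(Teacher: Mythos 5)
First, a point of comparison you could not have known: the paper does not prove this statement at all --- Theorem \ref{lpcohconfd} is imported wholesale from \cite{BK13}, so your sketch must be measured against the published Bourdon--Kleiner argument. At the level of architecture you have reconstructed it correctly: the passage from $l^pH^1(\Gamma)$ to a Besov-type function space $B_p(\partial\Gamma)$ on the boundary (Bourdon--Pajot), the critical exponent $Q_M$ of combinatorial modulus as the pivot quantity, and the construction of Ahlfors-regular metrics at exponents slightly above $Q_M$ (Keith--Kleiner, proved by Carrasco Piaggio) are all genuinely the ingredients of the real proof.

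However, three steps as you state them are wrong or open. (1) Your reduction invokes ``a standard consequence of the CLP that the conformal dimension \ldots is attained'': attainment under CLP is a well-known \emph{open conjecture} of Kleiner, not a standard fact; the equality of infima fortunately does not need it, but your argument as written rests on it. (2) You assert that ``the non-amenability input is exactly what forces the two critical exponents to coincide.'' This misassigns the load-bearing hypothesis: non-amenability only yields $l^pH^1=l^p\bar H^1$ (Theorem \ref{lpbarlp}), while the coincidence $p_c=Q_M$ is precisely where CLP enters --- the lower Loewner bound $\phi$, propagated across scales and locations by approximate self-similarity, is what shows that any non-constant function in $B_p$ has energy bounded below at \emph{every} scale for $p<Q_M$, contradicting $p$-summability and forcing $l^pH^1(\Gamma)=0$ there. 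Without CLP this equality genuinely fails for some hyperbolic groups (which is why Bourdon--Kleiner impose the hypothesis), so no formal condenser-by-condenser ``capacity--modulus duality, then let the continua vary'' can deliver it: the hard content is the uniformity over scales, and your sketch treats it as formal. (3) In the second half the roles of $\phi$ and $\psi$ are misassigned: the identity $Q_M=\mathrm{Confdim}_{AR}$ is CLP-free and holds for all approximately self-similar spaces (the inequality $Q_M\le \mathrm{Confdim}_{AR}$ comes from the Pansu--Tyson comparison giving modulus decay above the AR conformal dimension, not from $\phi$), while the nonvanishing half $p_c\le \mathrm{Confdim}_{AR}$ is Bourdon's explicit construction of non-constant Besov functions for $p>\mathrm{Confdim}_{AR}$, which your proposal does not actually contain. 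So the skeleton is right, but the two inequalities are each attributed to the wrong hypothesis, and one supporting claim is an open problem.
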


Now, using the above theorem and Theorem \ref{mainthm:invcoh}, we obtain the following corollary: 

\begin{cor} \label{cor:confdim}
Let $\Gamma$ and $\Lambda$ are two hyperbolic  groups without torsion and with boundaries having Combinatorial Loewner Property. Suppose, $\Gamma$ and $\Lambda$ are $L^q$-ME,  for $q > Confdim (\Gamma)$, $Confdim (\Lambda)$ and  $Confdim (\Lambda)$ and $Confdim (\Gamma)$, $Confdim (\Lambda) >1$ Then, the conformal dimensions (of the canonical conformal gauge) of the groups are equal. 
\end{cor}

\subsection{Free groups and surface groups are not $L^1$-Measure Equivalent}
Let $F_n$ be the free group with finitely many generators $n\geq 2$ and $\Sigma_g$ be the surface group with genus $g\geq 2$. We briefly show that there exists an affine action of $F_n$ on $l^1(F_n)$ (where the linear part is the left-regular representation on $l^1(F_n)$) which has unbounded orbits. We consider the Cayley graph $\mathcal{G}$ of $F_n$ and we denote the identity element in the Cayley graph as `$o$'. We define a 1-cocycle $b:F_n\rightarrow l^1(F_n)$ as follows: $b(g)(h)=1$ if the path from $g$ to $h$ directs towards $o$ and $b(g)(h)=0$ otherwise. In other words, $b(g)=\mathbf{1}_{\overline{og}}$, where $\bar{og}$ is the set of points in the line segment connecting $o$ and $g$. It is easy to see that $b$ is a 1-cocycle. Since $\| b(g)\|=d(o,go)$, we obtain that $\| b(g)\|\rightarrow\infty$ as $|g|\rightarrow\infty$. Therefore, the affine action associated to the 1-cocycle $b$ has unbounded orbits. Now, since  $l^1H^1\big(\pi_1(\Sigma_g)\big)=0$ (see Subsection 3.2), using Main Theorem \ref{mainthm:invcoh}, we obtain the following corollary:

\begin{cor}\label{cor:f2sg}
The free group $F_n$ with $n\geq 2$ and the surface group $\pi_1(\Sigma_g)$ with genus $g\geq 2$ are not $L^1$-ME. 
\end{cor}

This corollary has been proved by different method in Lemma 5.4 of \cite{BFS13}.

\subsection{The fundamental group of 3-manifolds and $L^q$-Measure Equivalent}

In this subsection, we discuss $L^q$-ME between the fundamental groups of 3-manifolds (the extensions of surface groups and integers) with three geometries $\mathbb{H}^3$, $\mathbb{H}^2\times\mathbb{R}$ and $\widetilde{SL_2(\mathbb{R})}$ of Thurston's eight geometries. Let $\Gamma$ be the fundamental group of a closed 3-manifold $M$, with constant sectional curvature  -1, and which is a fiber bundle over the circle and whose fibers are a closed surface of genus at least 2. Since $\Gamma$ is a cocompact lattice in $Isom(\mathbb{H}^3)$, $l^pH^1(\Gamma)=0$ iff $p\in [1,2]$. Now, we consider the non-trivial central extension $\widetilde{\pi_1(\Sigma_g})$ of surface group $\pi_1(\Sigma_g)$ and the trivial central extension $\pi_1(\Sigma_g)\times \mathbb{Z}$. We know from \cite{DT16} (Theorem 1.1) that $\widetilde{\pi_1(\Sigma_g)}$ and $\pi_1(\Sigma_g)\times \mathbb{Z}$ are not $L^1$-ME. Since the center of $\widetilde{\pi_1(\Sigma_g)}$ and $\pi_1(\Sigma_g)\times \mathbb{Z}$ are infinite, by using Proposition 1.9 in \cite{Bou10} we have the first $l^p$-cohomology of these two groups are zero for all $p\in (1,\infty)$. Now, using Theorem \ref{mainthm:invcoh}, we obtain the following corollary: 

\begin{cor}\label{cor:cesg}
Any cocompact lattice in $Isom(\mathbb{H}^3)$  is not $L^{2+\epsilon}$-ME with $\widetilde{\pi_1(\Sigma_g)}$ (QI to $\widetilde{SL_2(\mathbb{R})}$) or $\Sigma_g\times \mathbb{Z}$ (QI to $\mathbb{H}^2\times\mathbb{R}$) for all $\epsilon >0$. In particular, $\widetilde{\pi_1(\Sigma_g)}$ (QI to $\widetilde{SL_2(\mathbb{R})}$) or $\pi_1(\Sigma_g)\times \mathbb{Z}$ (QI to $\mathbb{H}^2\times\mathbb{R}$) are not $L^{2+\epsilon}$-ME with $\Gamma$ for all $\epsilon >0$, where $\Gamma$ is the cocompact lattice $Isom(\mathbb{H}^3)$ as defined above. 
\end{cor}

This corollary also follows from Theorem A of \cite{BFS13}. 

\section{Questions}

We can define higher $l^p$-cohomology groups $l^pH^k(\Gamma)$ of a group $\Gamma$ for any $k\in\mathbb{N}$ (see Section 1.3 in \cite{Bou10} for the details).  We do not know whether our main thereom \ref{mainthm:invcoh} is true for higher $l^p$-cohomology groups.

   \vspace{5mm}
   
 \textbf{Question 1}: Suppose two non-amenable groups $\Gamma$ and $\Lambda$ are $L^q$-ME for some $q>1$. Is it true that $l^pH^k(\Gamma)\neq 0$ if and only if $l^pH^k(\Lambda)\neq 0$, where $1< p\leq q$ and $k>1$. 
 
   \vspace{5mm}

We study Corollary \ref{cor:confdim} in the context of hyperbolic  groups  with CLP.  We do not know whether it is true  without the assumption of CLP or more generally for all hyperbolic groups. 
   
   \vspace{5mm}
   
 \textbf{Question 2}: Is the theorem true for all hyperbolic groups? 
 
    \vspace{5mm}

\section{Appendix}

In the literature, it has not been written much so far on the group cohomology with coefficients in a direct integral of Banach spaces (in particular for $l^p$-spaces). We give a short introduction of this topic and we prove Proposition \ref{dirint}.

We consider a locally compact second countable group $G$ with Haar measure $\nu$ and a measurable field of strongly  continuous representations on a Banach  space $(E, \|\cdot\|)$ parametrized by the measure space $(X,\mu)$ and  denoted by $\{\tau_G^{(x)} : x\in X\}$.  We define the direct integral of  the representations $\tau_\Lambda^{(x)}$ over the measure space $(X,\mu)$, denoted by $\int_X^{\oplus_p} \tau_\Lambda^{(x)} d\mu(x)$, using `Bochner integral'. We first define `Bochner Measurable' function.  The function $s: X\rightarrow E$ defined as $s=\sum_{i=1}^k \chi_{X_i} \xi_i$ (where $X_i$'s are disjoint measurable subsets of $X$ and $\xi_i$'s are vectors in $E$) is called a `simple function'. Now, a function $f: X\rightarrow E$ is called a Bochner Measurable function if there exists a sequence of simple functions $\{s_j\}_{j=1}^\infty$ such that $\|f(x)- s_j(x) \|\rightarrow 0$ as $j\rightarrow \infty$ for almost all $x\in X$.  Now, we define $\int_X^{\oplus_p} \tau_\Lambda^{(x)} d\mu(x)$ in the following way: 

$$  \int_X^{\oplus_p} \tau_G^{(x)} d\mu(x):=\{f: X \rightarrow E: \int_X \| f(x) \|^p   d\mu(x) <\infty \hspace*{1mm}\mbox {and f is Bochner measurable} \}, $$

where  $G$ acts on the above-mentioned space in the canonical way.  We prove the following proposition.

\begin{thm}\label{dirint1}
If  $\overline{H}^n(G, \tau_G^{(x)})= 0$ for a.e. $x$, then $\overline{H}^n\big(G, \int_X^{\oplus_p} \tau_\Lambda^{(x)} d\mu(x)\big)= 0$. In particular,  $\overline{H}^1(\Lambda, \pi)= 0$ implies that $\overline{H}^1\big(\Lambda, \int_X^{\oplus_p} \pi^{(x)} d\mu(x)\big)= 0$ where $\Lambda$ is a discrete group, $\pi^{(x)}=\pi$ for all $x$ and $\pi$ is the left-regular or right-regular representation of $\Lambda$ on $l^p(\Gamma)$ where $1\leq p <\infty$. 
\end{thm}

We first introduce  some notations and definitions.  Suppose $p\in [1,\infty)$  We denote by $L^p_{loc}(G^n,E)$ the set of measurable functions $f$ from $G^n$ to $E$ such that $f\vert_K \in  L^p(K,E)$ for all $K\in K(G^n)$, where we consider the product measure $\nu^n:=\nu\times\cdots\times\nu$ ($n$-times) and $K(G^n)$ is the collection of compact subsets of $G^n$. One gives the topology defined by the semi-norms

$$ P_K(f) = \big( \int_K \| f(x) \|^p d\nu^n(x) \big )^{1/p}$$

We denote by $L^p_C(G^n, E)$   the space of $p$-integrable functions with compact support. This space  is an inductive limit of the spaces $L^p(K,E)$,  where $K\in K(G^n)$.   We  define  $\mathcal{C}(G^n,E)$ as the collection of continuous functions $f$ from $G$ to $E$ with compact convergence topology. 

The reduced cohomology defined by the following chain is denoted by $\overline{H}^{n,p}\big(G, E\big)$:

$$ 0 \longrightarrow  L^p_{loc}(G,E) \longrightarrow L^p_{loc}(G^2,E) \longrightarrow \cdots  $$

\begin{lem}\label{coh-hom}
The dual of $L^p_{loc}\big(G^n,E\big)$ can be identified algebraically with $L_C^q\big(G^n, E'\big)$ by the following duality: 
$$ <\phi, \psi>:= \int_{G^n} <\phi (x), \psi(x)> d\nu^n(x),$$
\end{lem}
where  $1/p+1/q=1$,   $E'$ is the dual space of $E$ and $<\phi (x), \psi(x)> = \psi(x) \big(\phi (x) \big)$ for all $x \in G^n$.  We use Hahn-Banach Theorem to conclude the above lemma.

By the above duality,  we obtain the following complex: 

$$\cdots  \longrightarrow L_C^q\big(G^2, E'\big) \longrightarrow L_C^q\big(G, l^q(\Lambda)\big)\longrightarrow 0, $$   where $1/p+1/q=1$. 

\vspace{5mm}

Proof of Theorem \ref{dirint}:

\vspace{5mm}

\textit{Step 1}:   We claim that :  if  $\overline{H}^{n,p}(G, \tau_G^{(x)})= 0$ for a.e. $x$, ,  then $\overline{H}^{n,p}\big(G, \int_X^{\oplus_p} \tau_G^{(x)} d\mu(x)\big)= 0$.  We use the same argument given in Proposition 2.6 (page 190, \cite{Gui80}) to prove the claim, where use the duality between  $L^p_{loc}\big(G^n,E\big)$ and $L_C^q\big(G^n, E'\big)$ as given in Lemma \ref{coh-hom}. 

%where $\tau_\Lambda^{(x)}$ is the representation of $\Lambda$ on $l^p(\Lambda)$ for all $x$

\textit{Step 2}:  Using property (i) of D.2.2 (p. 340, \cite{Gui80}), we obtain that $\mathcal{C}(G^n,E)$ can be canonically identified with a dense subset of $L^p_{loc}(G^n, E)$.  Hence the $n$-th reduced cohomology w.r.t. the chain $\mathcal{C}(G^n,E)$, i.e., $\overline{H}^n\big(G, E\big)$,  is equal to the $n$-th reduced cohomology w.r.t. to  the chain $L^p_{loc}(E)$, i.e.  $\overline{H}^{n,p}\big(G, E\big)$.

\textit{Step 3} : Using Step 2, we obtain that $\overline{H}^{n,p}(G, \tau_G^{(x)})=\overline{H}^n(G, \tau_G^{(x)})$ for all $x$ in $X$ and $\overline{H}^n\big(G, \int_X^{\oplus_p} \tau_G^{(x)} d\mu(x)\big)=\overline{H}^{n,p}\big(G, \int_X^{\oplus_p} \tau_G^{(x)} d\mu(x)\big)$. Hence,  we have our theorem using  Step 1 and Step 2.  \hfill\(\square\)


\begin{thebibliography}{KM98b}

\bibitem[Aus16] {Aus16}  T. Austin ,  Integrable measure equivalence for groups of polynomial growth. With Appendix B by Lewis Bowen.  Groups Geom. Dyn. 10 (2016), no. 1, 117-154. 


 \bibitem[Aus16']{Aus16'} T. Austin, Behaviour of entropy under bounded and integrable orbit equivalence. Geom. Funct. Anal. Vol. 26 (2016) 1483-1525
 
 \bibitem[BFGM07]{BFGM07} Uri Bader, Alex Furman, Tsachik Gelander, and Nicolas Monod. Property (T) and rigidity
for actions on Banach spaces. Acta Math., 198(1):57-105, 2007.

\bibitem[BFS13]{BFS13} Uri Bader, Alex Furman, and Roman Sauer. Integrable measure equivalence and
rigidity of hyperbolic lattices. Invent. Math., 194(2): 313-379, 2013.

\bibitem[Bou10]{Bou10} M. Bourdon, Cohomologie et actions isom\'{e}triques propres sur les espaces $L^p$, in Geometry,
Topology, and Dynamics in Negative Curvature (Bangalore 2010), London Math. Soc. Lecture Note Ser., vol. 425, Cambridge Univ. Press, 2016

\bibitem[BP03]{BP03} M. Bourdon and H. Pajot , Cohomologie $l^p$ et espaces de Besov. J. Reine Angew. Math., 
558:85-108, 2003. 




\bibitem[BMV05]{BMV05} M. Bourdon, F. Martin and A. Valette, Vanishing and non-vanishing for the first $L^p$-cohomology. Commentarii Mathematici Helvetici 80 (2005), 377-389 .



\bibitem[Bow17]{Bow17} L. Bowen, Integrable orbit equivalence rigidity for free groups, Israel Journal of Mathematics , September 2017, Volume 221, Issue 1, pp 471- 480. 

\bibitem[CTV08]{CTV08} Yves De Cornulier, Romain Tessera, and Alain Valette. Isometric group actions on Banach
spaces and representations vanishing at infinity. Transform. Groups, 13(1):125-147, 2008



\bibitem[Das18]{Das18} K. Das, From the geometry of box spaces to the geometry and measured couplings of groups. 
Journal of Topology and Analysis. Vol. 10, No. 02, pp. 401-420 (2018)

\bibitem[Das18']{Das18'} K. Das, From the coarse geometry of warped cones to the measured coupling of groups. Preprint (2023). https://arxiv.org/abs/2004.10000

\bibitem[DT18]{DT18} K. Das and R. Tessera, Integrable measure equivalence and the central extension of surface groups. Groups, Geometry and Dynamics. Volume 10, Issue 3, 2016, pp. 965-983. 

\bibitem[DKMT22]{DKMT22} T. Delabie, J. Koivisto, F. Le Ma\^{i}tre and R. Tessera
Quantitative measure equivalence between amenable groups
Annales Henri Lebesgue, Volume 5 (2022), pp. 1417-1487.

\bibitem[HH23]{HH23} H. Horbez and J. Huang, Integrable measure equivalence rigidity of right-angled Artin groups via quasi-isometry, preprint (2023). https://arxiv.org/abs/2309.12147

\bibitem[Fur99a]{Fur99a}  A. Furman,  Gromov's measure equivalence and rigidity of higher rank lattices.
Ann. of Math. (2), 150(3):1059 -1081, 1999.

\bibitem[Fur99b]{Fur99b}  A.  Furman,  Orbit equivalence rigidity. Ann. of Math. (2), 150(3):1083 -1108,
1999.

\bibitem[Fur11]{Fur11} A. Furman,  A survey of measured group theory. In Geometry, rigidity, and group
actions, Chicago Lectures in Math., pages 296-374. Univ. Chicago Press, Chicago,
IL, 2011



\bibitem[Gro87]{Gro87}  M. Gromov, Hyperbolic groups, Essays in Group Theory, S. Gersten ed., MSRI Publications 8 (1987), 75-265 Springer.



\bibitem[Gro93]{Gro93}  M. Gromov. Asymptotic invariants of infinite groups. In Geometric group theory,
Vol. 2 (Sussex, 1991), volume 182 of London Math. Soc. Lecture Note Ser., pages
1- 295. Cambridge Univ. Press, Cambridge, 1993.


\bibitem[Gui72]{Gui72} A. Guichardet, Sur la cohomologie des groupes topologiques II. Bull. Sci. Math. (2)
96 (1972), 305-332.

\bibitem[Gui80]{Gui80}  A. Guichardet, Cohomologie des groupes topologiques et des alg\`{e}bres de Lie. Textes
Mathematiques, 2. CEDIC, Paris, 1980. 

\bibitem[Loh90]{Loh90} N. Lohoue, Remarques sur un th\'{e}or\`{e}me de Strichartz. C. R. Acad. Sci. Paris S\'{e}r. I 311, 507-510 (1990)

\bibitem[Ma79]{Ma79} G.A. Margulis. Finiteness of quotient groups of discrete subgroups. Func. Anal. Appl. 13 (1979), 178--187.

\bibitem[Mou88]{Mou88}  G.  Moussong, Hyperbolic Coxeter groups, Ph.D. thesis, The Ohio State University,
1988.

\bibitem[MdlS23]{MdlS23} A. Marrakchi and M. de la Salle, Isometric actions on $L^p$-spaces: dependence on the value of p. Compositio Mathematica, volume 159, Issue 6, 2023, p. 1300-1313. 

\bibitem[MT10]{MT10} John M. Mackay, Jeremy T. Tyson, Conformal Dimension: Theory and Application By . 
University Lecture Series. Volume: 54; 2010.  



\bibitem[Pan89]{Pan89} P. Pansu, Dimension conforme et sph\`{e}re \`{a}  l'infini des vari\'{e}t\'{e}s \`{a} courbure n\'{e}gative. Ann. Acad. Sci. Fenn. Ser. A I Math. 14 (1989), 177-212.

\bibitem[Pan89']{Pan89'} P. Pansu, Cohomologie $L^p$ des vari\'{e}t\'{e}s \`{a} courbure n\'{e}gative, cas du degr\'{e} un, PDE and Geometry 1988, Rend. Sem. Mat. Torino, Fasc. Spez. (1989), 95-120

\bibitem[Pul06]{Pul06} M. J. Puls, The first $L^p$-cohomology of some finitely generated groups and p-harmonic functions. J. Funct. Anal. 237 (2006), 391-401.

\bibitem[Sh00]{Sh00} Y. Shalom. Rigidity, unitary representations of semisimple groups, and fundamental groups of manifolds with rank one
transformation group. Ann. of Math. 152 (2000), 113--182.


\bibitem[Sh00']{Sh00'} Y. Shalom. Rigidity of commensurators and irreducible lattices. Invent. Math. 141 (2000),
1--54.

\bibitem[Sh04]{Sh04} Y. Shalom. Harmonic analysis, cohomology, and the large scale geometry of amenable groups.
Acta Mathematica 193 (2004), 119--185.

\bibitem[Yu05]{Yu05} G. Yu, Hyperbolic groups admit proper affine isometric actions on $l^p$-spaces, GAFA, 15 (2005), 1144-1151. 


\end{thebibliography}
\end{document}